\newif\ifLTEX
\LTEXtrue

\documentclass[a4paper]{amsart}

\usepackage{mathtools,amssymb}
\usepackage[abbrev]{amsrefs}
\usepackage{xparse}

\ifLTEX
\usepackage{graphicx}
\else
\usepackage[dvipdfmx]{graphicx}
\fi
 
\usepackage{hyperref}
\usepackage{subcaption}
\captionsetup[subfigure]{labelfont=rm}
\usepackage{enumitem}
\usepackage{amsthm}
\usepackage{pdfpages}
\usepackage[all]{xy} 
\usepackage{float}
\usepackage{marginfix}
\usepackage{bm}


\pagestyle{headings}

\newtheorem{thm}{Theorem}[section]
\newtheorem{lem}[thm]{Lemma}

\newtheorem{cor}[thm]{Corollary}
\newtheorem{prop}[thm]{Proposition}

\theoremstyle{definition}

\newcommand{\R}{\mathbb{R}}

\newcommand{\Z}{\mathbb{Z}}

\newcommand{\M}{\mathrm{Mod}_{0,2n+2}}

\newcommand{\LM}{\mathrm{LMod}_{2n+2}}

\newcommand{\Mg}{\mathrm{Mod}_{g}}

\newcommand{\Mgp}{\mathrm{Mod}_{g,n}}
\newcommand{\SM}{\mathrm{SMod}_{g;k}}

\newcommand{\Hg}{\mathcal{H}_{g}}
\newcommand{\SH}{\mathcal{SH}_{g;k}}

\newcommand{\Hil}{\bm{\mathrm{H}}_{2n+2}}
\newcommand{\LH}{\bm{\mathrm{LH}}_{2n+2}}
\newcommand{\LHk}{\bm{\mathrm{LH}}_{2n+2;k}}
\newcommand{\LHt}{\bm{\mathrm{LH}}_{2n+2;2}}

\newcommand{\A}{\mathcal{A}}
\newcommand{\B}{\mathcal{B}}

\numberwithin{equation}{section}


\allowdisplaybreaks
\sloppy

\title[Small generating set for balanced superelliptic handlebody group]{A small generating set for the balanced superelliptic handlebody group}

\author[G.~Omori]{Genki Omori}
\address{
(Genki Omori)
Department of Mathematics, Faculty of Science and Technology, Tokyo University of Science, 2641 Yamazaki, Noda-shi, Chiba, 278-8510 Japan
}
\email{omori\_genki@ma.noda.tus.ac.jp}

\if0
\author[]{}
\address{
()
}
\email{}
\fi

\keywords{balanced superelliptic mapping class group; handlebody group; liftable mapping class group; Hilden group; small generating set}
\subjclass[2010]{57S05, 57M07, 57M05, 20F05}

\date{\today}

\begin{document}
\maketitle
\begin{abstract}
The balanced superelliptic handlebody group is the normalizer of the transformation group of the balanced superelliptic covering space in the handlebody group of the total space.  
We prove that the balanced superelliptic mapping class group is generated by four elements. 
To prove this, we also proved that the liftable Hilden group is generated by three elements. 
This generating set for the liftable Hilden group is minimal except for some hyperelliptic cases and the generating set for the balanced superelliptic mapping class group above is also minimal for several cases.  
\end{abstract}

\section{Introduction}

Let $H_{g}$ be an oriented 3-dimensional handlebody of genus $g\geq 0$ and $B^3=H_0$ a 3-ball. 
We write $\Sigma _{g}=\partial H_g$ and $S^2=\partial B^3$, respectively. 
For a subset $A$ of $\Sigma _g$ $(g\geq 0)$, the {\it mapping class group} $\mathrm{Mod}(\Sigma _{g}, A)$ of the pair $(\Sigma _{g}, A)$ is the group of isotopy classes of orientation-preserving self-homeomorphisms on $\Sigma _{g}$ which preserve $A$ setwise. 
When $A$ is a set of distinct $n$ points, we denote the mapping class group by $\mathrm{Mod}_{g,n}$. 
We denote $\mathrm{Mod}_{g,0}$ simply by $\mathrm{Mod}_{g}$. 

Humphries~\cite{Humphries} proved that $\Mgp $ is generated by $2g+1$ Dehn twists for $g\geq 2$ and $n\in \{ 0,1\}$ and the generating set is minimal in generating sets for $\Mgp$ which consist of Dehn twists. 
We focus on the study of minimal generating sets for mapping class groups.
Wajnryb~\cite{Wajnryb} proved that $\mathrm{Mod}_{g,n}$ is generated by two elements for $g\geq 1$ and $n\in \{0,1\}$. 
Since $\mathrm{Mod}_{g,n}$ is not a cyclic group, this Wajnryb's generating set is minimal. 
After that, Korkmaz~\cite{Korkmaz} proved that $\mathrm{Mod}_{g,n}$ is generated by two elements whose one element is a Dehn twist for $g\geq 1$ and $n\in \{0,1\}$. 
Monden~\cite{Monden} showed that $\mathrm{Mod}_{g,n}$ is generated by two elements for $g\geq 3$ and $n\geq 0$, and this generating set is also minimal. 

The \textit{handlebody group} $\mathcal{H}_{g}$ is the group of isotopy classes of orientation-preserving self-homeomorphisms on $H_{g}$. 
Suzuki~\cite{Suzuki} gave a generating set for $\Hg $ which consists of six elements for $g\geq 3$ (actually, the number of the generators can be reduced to five). 
We have a well-defined injective homomorphism $\mathcal{H}_{g}\hookrightarrow \Mg $ by restricting the actions of elements in $\mathcal{H}_{g}$ to $\Sigma _g$ and using the irreducibility of $H_g$.  
By this injective homomorphism, we regard $\Hg$ as the subgroup of $\Mg $ whose elements extend to $H_g$.  

For integers $n\geq 1$ and $k\geq 2$ with $g=n(k-1)$, the \textit{balanced superelliptic covering map} $p=p_{g,k}\colon H_g\to B^3$ is a $k$-fold branched covering map with the covering transformation group generated by the \textit{balanced superelliptic rotation} $\zeta =\zeta _{g,k}$ of order $k$ (precisely defined in Section~\ref{section_bscov} and see Figure~\ref{fig_bs_periodic_map}). 
The branch points set $\A \subset B^3$ of $p$ is the disjoint union of $n+1$ proper arcs in $B^3$ and the restriction of $p$ to the preimage $\widetilde{\A }$ of $\A $ is injective (i.e. $\widetilde{\A }$ is the fixed point set of $\zeta $). 
The restriction $p|_{\Sigma _g}\colon \Sigma _g\to S^2=\Sigma _0$ is also a $k$-fold branched covering with the branch points set $\B =\partial \A $ and we also call $p|_{\Sigma _g}$ the balanced superelliptic covering map. 
When $k=2$, $\zeta |_{\Sigma _g}$ coincides with a hyperelliptic involution, and for $k\geq 3$, the balanced superelliptic covering space was introduced by Ghaswala and Winarski~\cite{Ghaswala-Winarski2}. 
We often abuse notation and simply write $p|_{\Sigma _g}=p$ and $\zeta |_{\Sigma _g}=\zeta $, respectively.  

For $g=n(k-1)\geq 1$, an orientation-preserving self-homeomorphism $\varphi $ on $\Sigma _{g}$ or $H_g$ is \textit{symmetric} for $\zeta =\zeta _{g,k}$ if $\varphi \left< \zeta \right> \varphi ^{-1}=\left< \zeta \right> $. 
The \textit{balanced superelliptic mapping class group} (or the \textit{symmetric mapping class group}) $\SM $ is the subgroup of $\Mg $ which consists of elements represented by symmetric homeomorphisms. 
In particular, $\mathrm{SMod}_{g;2}$ is called the \emph{hyperelliptic mapping class group}. 
Birman and Hilden~\cite{Birman-Hilden3} showed that $\SM $ coincides with the group of symmetric isotopy classes of symmetric homeomorphisms on $\Sigma _g$. 
We call the intersection $\SH =\SM \cap \Hg $ the \textit{balanced superelliptic handlebody group} (or the \textit{symmetric handlebody group}). 
By Lemma~1.21 in \cite{Iguchi-Hirose-Kin-Koda} and Lemma~2.1 in \cite{Omori-Yoshida}, $\SH $ is also isomorphic to the group of symmetric isotopy classes of symmetric homeomorphisms on $H_g$. 
When $k=2$, Stukow~\cite{Stukow} gave a minimal generating set for $\mathrm{SMod}_{g;2}$ which consists of two torsion elements. 
For the case $k\geq 3$, the author~\cite{Omori} proved that $\SM $ is generated by three elements and this generating set is minimal except for the case of even $n$.

We regard $\mathrm{Mod}(S^2,\B )$ as $\M $ 
and let $\Hil $ be the group of isotopy classes of orientation-preserving self-homeomorphisms on $B^3$ fixing $\A $ setwise. 
It is a well-known result that $\M $ is generated  by two elements and this generating set is minimal. 
The group $\Hil $ is introduced by Hilden~\cite{Hilden} and is called the \textit{Hilden group}. 
He gave a finite generating set for $\Hil $ in~\cite{Hilden}. 
Tawn~\cite{Tawn1} gave a finite presentation for $\Hil $ whose generating set is smaller than Hilden's generating set. 
By restricting the actions of elements in $\Hil $ to $S^2$, we have an injective homomorphism $\Hil \hookrightarrow \M $ (see \cite[p.~157]{Brendle-Hatcher} or \cite[p.~484]{Hilden}) and regard $\Hil $ as the subgroup of $\M $ whose elements extend to homeomorpisms on $B^3$ which preserve $\A $ by this injective homomorphism. 
Since elements in $\SM $ (resp. in $\SH $) are represented by elements which preserve $\B $ (resp. $\A $) by the definitions, we have homomorphisms $\theta \colon \SM \to \M $ and $\theta |_{\SH } \colon \SH \to \Hil $ that are introduced by Birman and Hilden~\cite{Birman-Hilden2}. 
They also proved that $\theta (\mathrm{SMod}_{g;2} )=\M $, and Hirose and Kin~\cite{Hirose-Kin} showed that $\theta (\mathcal{SH}_{g;2})=\Hil $. 

A self-homeomorphism $\varphi $ on $\Sigma _{0}$ (resp. on $B^3$) is \textit{liftable} with respect to $p=p_{g,k}$ if there exists a self-homeomorphism $\widetilde{\varphi }$ on $\Sigma _{g}$ (resp. on $H_g$) such that $p\circ \widetilde{\varphi }=\varphi \circ p$, namely, the following diagrams commute: 
\[
\xymatrix{
\Sigma _g \ar[r]^{\widetilde{\varphi }} \ar[d]_p &  \Sigma _{g} \ar[d]^p & H_g \ar[r]^{\widetilde{\varphi }}\ar[d]_{p}  &  H_{g}\ar[d]^{p} \\
\Sigma _{0}  \ar[r]_{\varphi } &\Sigma _{0}, \ar@{}[lu]|{\circlearrowright} & B^3  \ar[r]_{\varphi } &B^3. \ar@{}[lu]|{\circlearrowright}
}
\] 
The \textit{liftable mapping class group} $\mathrm{LMod}_{2n+2;k}$ is the subgroup of $\M $ which consists of elements represented by liftable homeomorphisms on $S^2$ for $p|_{S^2}$, and the \textit{liftable Hilden group} $\bm{\mathrm{LH}}_{2n+2;k}$ is the subgroup of $\Hil $ which consists of elements represented by liftable homeomorphisms on $B^3$ for $p$. 
As a homomorphic image in $\M $, we have $\bm{\mathrm{LH}}_{2n+2;k}=\mathrm{LMod}_{2n+2;k}\cap \Hil $ by Lemma~2.2 in \cite{Omori-Yoshida}. 
By the definitions, we have homomorphisms $\theta \colon \SM \to \mathrm{LMod}_{2n+2;k}$ and $\theta |_{\SH }\colon \SH \to \bm{\mathrm{LH}}_{2n+2;k}$. 
Birman and Hilden~\cite{Birman-Hilden2} proved that $\theta $ induces an isomorphism $\mathrm{LMod}_{2n+2;k}\cong \SM /\left< \zeta \right> $, and Hirose and Kin~\cite{Hirose-Kin} showed that $\theta |_{\mathcal{SH}_{g;2}}$ induces an isomorphism $\bm{\mathrm{LH}}_{2n+2;2}=\Hil \cong \mathcal{SH}_{g;2}/\left< \zeta _{g,2}\right> $ (remark that $\mathrm{LMod}_{2n+2;2}=\M $). 
For the case of the handlebody subgroup and $k\geq 3$, the author and Yoshida~\cite[Lemma~2.3]{Omori-Yoshida} showed that $\theta |_{\SH }$ induces an isomorphism $\bm{\mathrm{LH}}_{2n+2;k}\cong \SH /\left< \zeta \right> $.  
For $k\geq 3$, the author~\cite{Omori} proved that $\mathrm{LMod}_{2n+2;k}$ is generated by three elements and this generating set is minimal except for the case of even $n$.

The main theorem in this paper is as follows. 

\begin{thm}\label{thm_lmod}
For $k\geq 2$, $\bm{\mathrm{LH}}_{2n+2;k}$ is generated by three elements. 
\end{thm}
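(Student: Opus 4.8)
The plan is to begin from an explicit finite generating set for $\LHk$ recalled from the earlier sections and to compress it to three elements by exploiting the rotational symmetry of the branch data together with the standard conjugation trick. I would treat the hyperelliptic case $k=2$, where $\LHk=\Hil$ is the full Hilden group and Tawn's generators are available, in parallel with the case $k\geq 3$, where the lifting constraint is genuine and every candidate generator must be checked to respect the monodromy of $p$ around the $2n+2$ points of $\B$. The shape of the sought generating set is guided, for $k\geq 3$, by the known three-element generating set of the ambient group $\mathrm{LMod}_{2n+2;k}$: a rotation, a twist, and one extra element.

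First I would fix a periodic element $\rho\in\LHk$ realized by a rigid rotation of $B^{3}$ that cyclically permutes the $n+1$ defining arcs of $\A$ and hence permutes $\B$ in compatible blocks; such a $\rho$ is manifestly liftable since it preserves the monodromy of $p$. Next I would single out one local generator $\tau$, a half-twist-type element supported near a pair of adjacent arcs, and verify directly that $\tau\in\LHk$. The key point is then the computation that the conjugates $\rho^{i}\tau\rho^{-i}$ reproduce all of the analogous local generators distributed around the configuration, so that $\langle\rho,\tau\rangle$ already contains the subgroup generated by every such twist.

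The third generator $\omega$ has to supply the part of $\LHk$ not reachable by conjugating a single local twist --- concretely an arc-flip reversing the two endpoints of one arc, i.e.\ the Hilden-type generator that is not of half-twist form. I would then show that $\langle\rho,\tau,\omega\rangle$ contains each member of the starting generating set, again spreading $\omega$ over the whole configuration by conjugation with $\rho$ and using the evident relations among $\rho$, $\tau$, and $\omega$ to eliminate the remaining generators; for $k=2$ this reduces to hitting Tawn's generators, and for $k\geq 3$ the same scheme works once liftability is maintained at each stage.

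The main obstacle I anticipate is twofold. First, liftability of the candidate generators and of every intermediate conjugate must be controlled: for $k\geq 3$ the monodromy condition around $\B$ is delicate and is not preserved by arbitrary half-twists, so the choice of $\tau$ and $\omega$ and the verification that their $\rho$-orbits stay inside $\LHk$ form the crux. Second, there is genuine bookkeeping in confirming that $\rho^{i}\tau\rho^{-i}$ and $\rho^{i}\omega\rho^{-i}$ equal the prescribed generators exactly rather than merely conjugate, similar-looking elements. I expect the smallest values of $n$ and the exceptional hyperelliptic subcase flagged as failing minimality to require separate direct verification.
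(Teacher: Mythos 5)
Your overall strategy coincides with the paper's (start from the known finite generating sets and compress them by conjugation with a shift-like element), but as specified your three-element set does not suffice, and the gap is concrete. The generating sets you must hit fall into three conjugacy families for $k=2$ ($s_i$, $r_i$, $\sigma_{2j-1}$; Proposition~\ref{prop_gen_Hilden-grp}) and four families for $k\geq 3$ ($s_i$, $r_i$, $t_j$, and $r$; Proposition~\ref{prop_gen_liftable-Hilden}). Spending one of your three slots on a \emph{bare} rotation $\rho$ leaves only two seeds $\tau,\omega$; conjugation by $\rho$ spreads each seed through its own family but never produces a new family, so at least one family is unreached, and your appeal to ``evident relations among $\rho$, $\tau$, and $\omega$'' is precisely the missing content rather than bookkeeping. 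The paper's proof avoids exactly this budget problem by never taking the shift as a standalone generator: its first generator is the composite $s\sigma_1$ (for $k=2$), resp.\ $sr$ (for $k\geq 3$), where $s=s_n\cdots s_2s_1$, and the identity $(s\sigma_1)^{-(i-1)}s_1(s\sigma_1)^{i-1}=s_i$ bootstraps the shift back: the conjugates of the seed $s_1$ give all $s_i$, their product reconstitutes $s$, and then $\sigma_1=s^{-1}(s\sigma_1)$ (resp.\ $r=s^{-1}(sr)$) is extracted, so the composite slot yields two families for the price of one. Moreover, for $k\geq 3$ the fourth family $t_j$ is obtained not from rotational symmetry at all but from the specific relation $r_1r_2\cdots r_ns_n\cdots s_2s_1t_1=1$ of the Omori--Yoshida presentation, which your proposal nowhere invokes.

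There is a second, independent defect for $k\geq 3$: your $\omega$, described as a single arc-flip interchanging the two endpoints of one arc, is not an element of $\LHk$ at all. Its image under $\Psi$ is a transposition $(2j-1\ \, 2j)$, which is neither parity-preserving nor parity-reversing, so by Lemma~\ref{lem_GW} it does not lift. You correctly flag liftability as a risk, but the forced repair --- replacing $\omega$ by the parity-reversing global flip $r=\sigma_1\sigma_3\cdots\sigma_{2n+1}$ or by the full twist $t_1=\sigma_1^2$ --- makes the family-count shortfall of the previous paragraph unavoidable: with $\langle\rho,s_1,r\rangle$, say, there is no visible route to $r_1$ (the only relation the paper uses, the one displayed above, eliminates $t_1$ in terms of the $s_i$ and $r_i$, not $r_1$ in terms of the rest), and since $H_1(\LHk)\cong\Z\oplus\Z_2\oplus\Z_2$ requires exactly three generators, there is no homological slack with which to absorb a wasted slot. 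To repair the proposal you would have to adopt the paper's composite-generator device (or an equivalent), at which point the bare rotation $\rho$ disappears from the generating set.
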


Theorem~\ref{thm_lmod} is proved in Sections~\ref{section_lmod}. 
We remark that $\LHt =\Hil $ and $\LHk =\bm{\mathrm{LH}}_{2n+2;l}$ for $k, l\geq 3$ by Lemma~\ref{lem_GW} (that is Lemma~3.6 in \cite{Ghaswala-Winarski1}). 
Hence we omit ``$k$'' in the notation of the liftable Hilden group for $k\geq 3$ (i.e. we express $\bm{\mathrm{LH}}_{2n+2;k}$ as $\LH $ for $k\geq 3$). 
The next corollary follows immediately from Theorem~\ref{thm_lmod} and the following exact sequence which is obtained from Theorem~2.11 in~\cite{Hirose-Kin} and Lemma~2.3 in~\cite{Omori-Yoshida}:  
\begin{eqnarray*}\label{exact_SH_handlebody}
1\longrightarrow \left< \zeta \right> \longrightarrow \SH \stackrel{\theta }{\longrightarrow }\LHk \longrightarrow 1. 
\end{eqnarray*}

\begin{cor}\label{thm_smod}
Assume that $g=n(k-1)$ for $n\geq 1$ and $k\geq 2$. 
Then, $\SH $ is generated by four elements. 
\end{cor}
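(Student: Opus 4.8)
The plan is to deduce the statement directly from Theorem~\ref{thm_lmod} together with the short exact sequence
\[
1\longrightarrow \left< \zeta \right> \longrightarrow \SH \stackrel{\theta }{\longrightarrow }\LHk \longrightarrow 1
\]
recorded just above, via the standard principle that a group is generated by lifts of a generating set of a quotient together with a generating set of the kernel. First I would invoke Theorem~\ref{thm_lmod} to fix three elements $f_1,f_2,f_3$ generating $\LHk $. Since $\theta $ is surjective, I would then choose preimages $\widetilde{f}_1,\widetilde{f}_2,\widetilde{f}_3\in \SH $ with $\theta (\widetilde{f}_i)=f_i$; the kernel $\left< \zeta \right> $ is cyclic, so it is generated by the single element $\zeta $.

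Next I would verify that $\SH =\left< \zeta ,\widetilde{f}_1,\widetilde{f}_2,\widetilde{f}_3\right> $. Given any $h\in \SH $, its image $\theta (h)$ is a word $w(f_1,f_2,f_3)$ in the chosen generators of $\LHk $; forming the same word $\widetilde{w}=w(\widetilde{f}_1,\widetilde{f}_2,\widetilde{f}_3)$ in $\SH $ and using that $\theta $ is a homomorphism gives $\theta (\widetilde{w})=\theta (h)$, whence $h\widetilde{w}^{-1}\in \ker \theta =\left< \zeta \right> $. Thus $h\widetilde{w}^{-1}=\zeta ^{j}$ for some integer $j$, so that $h=\zeta ^{j}\widetilde{w}$ lies in $\left< \zeta ,\widetilde{f}_1,\widetilde{f}_2,\widetilde{f}_3\right> $. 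As $h$ was arbitrary, these four elements generate $\SH $, which is the assertion.

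At this stage there is essentially no obstacle: the entire difficulty has already been absorbed into Theorem~\ref{thm_lmod}, which supplies the three-element generation of the liftable Hilden group, and into the exactness of the displayed sequence, imported from Theorem~2.11 in~\cite{Hirose-Kin} and Lemma~2.3 in~\cite{Omori-Yoshida}. The only point deserving a moment's care is that the kernel $\left< \zeta \right> $ is genuinely cyclic, so it contributes exactly one generator rather than several; this is precisely what keeps the total count at $3+1=4$.
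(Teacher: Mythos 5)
Your proposal is correct and follows exactly the route the paper intends: the paper states that the corollary ``follows immediately'' from Theorem~\ref{thm_lmod} and the displayed exact sequence, and your argument simply spells out the standard lifting-of-generators step (three lifts of generators of $\LHk $ plus the single generator $\zeta $ of the cyclic kernel, giving $3+1=4$). There is nothing to add or correct.
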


For explicit generators for $\LHk $ in Theorem~\ref{thm_lmod} and their lifts to $H_g$ in Corollary~\ref{thm_smod}, see Proposition~\ref{prop_LM}, Appendix in \cite{Hirose-Kin}, and Section~5.1 in~\cite{Omori-Yoshida}. 

The integral first homology group $H_1(G)$ of a group $G$ is isomorphic to the abelianization of $G$. 
By Corollary~A.9 in \cite{Hirose-Kin}, Theorems~1.1 and 1.2 in~\cite{Omori-Yoshida}, and a computation of aberianization of $\Hil $ from the presentation in~\cite{Tawn1}, the integral first homology groups of $\LHk $ and $\SH $ for $k\geq 2$ are as follows (remark that Tawn gave a presentation for the Hilden group of one marked disk case in~\cite{Tawn1}. The group $\Hil $ is a quotient of its Hilden group):

\begin{thm}[\cite{Tawn1} for the case of $k=2$ and \cite{Omori-Yoshida} for the case of $k\geq 3$]\label{thm_abel_lmod}
For $n\geq 1$ and $k\geq 2$, 
\[
H_1(\LHk )\cong \left\{ \begin{array}{ll}
 \Z \oplus \Z _2&\text{if }  k=2\text{ and }n \text{ is odd},   \\
 \Z \oplus \Z _{2}\oplus \Z _{2}& \text{otherwise}.
 \end{array} \right.
\]
\end{thm}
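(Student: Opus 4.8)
The plan is to compute the abelianization $H_1(\LHk)=\LHk/[\LHk,\LHk]$ directly from a finite presentation of $\LHk$, treating the two ranges of $k$ separately and then reading off the invariant factors.

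First I would reduce to a uniform presentation in each range. For $k\geq 3$, Lemma~\ref{lem_GW} gives $\LHk\cong \LH$ independently of $k$, so it suffices to fix one value (say $k=3$) and work with a single presentation of $\LH$; the presentation I would use is the one underlying Theorems~1.1 and~1.2 of \cite{Omori-Yoshida}, expressed in terms of the three generators of Theorem~\ref{thm_lmod} (see Proposition~\ref{prop_LM}). For $k=2$ we have $\LHt=\Hil$, and here I would start from Tawn's presentation in \cite{Tawn1}. Since Tawn presents the Hilden group of a once-marked disk and $\Hil$ is a quotient of that group, the first task is to identify the extra relation that passes to the quotient and to record its exponent-sum vector; everything downstream is then a computation inside a finitely presented group.

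Second, I would abelianize. From a presentation $\langle x_1,\dots,x_m\mid r_1,\dots,r_q\rangle$ one has $H_1\cong \Z^m/\langle v(r_1),\dots,v(r_q)\rangle$, where $v(r_j)\in\Z^m$ is the vector of exponent sums of $r_j$. I would assemble the integer relation matrix $M$ whose rows are the $v(r_j)$ and compute its Smith normal form over $\Z$; the rank deficiency of $M$ gives the free rank and the nontrivial invariant factors give the torsion. To fix the free rank at $1$ I would also exhibit an explicit surjection $\LHk\to\Z$ (a lower bound) and check that $M$ has rank $m-1$ (an upper bound), and to pin the torsion I would produce the one or two surjections $\LHk\to\Z_2$ predicted by the statement and match them against the $2$'s appearing on the diagonal of the Smith normal form.

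Third, I would isolate the source of the $n$-parity dichotomy in the case $k=2$. The expectation is that exactly one abelianized relator carries a coefficient that is $n$ (or $n\pm 1$) modulo $2$, so that its contribution to $M$ collapses a would-be $\Z_2$ summand precisely when $n$ is odd; for $k\geq 3$ the corresponding coefficient is even for every $n$, leaving both $\Z_2$ factors and giving the uniform answer $\Z\oplus \Z_2\oplus \Z_2$. I expect the main obstacle to be the bookkeeping rather than any conceptual difficulty: Tawn's presentation is sizable, so carrying out the Smith normal form by hand and, in particular, correctly tracking how the marked-disk quotient relation and the $n$-dependent coefficient interact is where errors are easy to make. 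The cross-check by independently constructed homomorphisms to $\Z$ and to $\Z_2$ is what I would rely on to confirm the final invariant factors.
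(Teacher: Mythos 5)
Your proposal takes essentially the same route as the paper, which gives no internal proof of Theorem~\ref{thm_abel_lmod}: for $k=2$ it derives the result exactly as you propose, by abelianizing Tawn's presentation~\cite{Tawn1} after passing to $\Hil$ as a quotient of the marked-disk Hilden group, and for $k\geq 3$ it simply cites \cite{Omori-Yoshida}, whose Theorem~1.1 is obtained by abelianizing the presentation in their Theorem~4.1 (with the $n$-parity dichotomy entering through relations such as $r_1\cdots r_ns_n\cdots s_1t_1=1$, matching your expectation, since $t_1=\sigma_1^2$ only when $k=2$). Your plan is correct, with the minor caveat that the kernel of the quotient onto $\Hil$ may require more than the single extra relator you anticipate, which affects only the bookkeeping you already flag.
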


\begin{thm}[\cite{Hirose-Kin} for the case of $k=2$ and \cite{Omori-Yoshida} for the case of $k\geq 3$]\label{thm_abel_smod}
For $n\geq 1$ and $k\geq 2$ with $g=n(k-1)$,
\[
H_1(\SH )\cong \left\{ \begin{array}{ll}
 \Z \oplus \Z _2\oplus \Z _{2}\oplus \Z _{2}&\text{if }  k\geq 4 \text{ is even and }n \text{ is odd},   \\
 \Z \oplus \Z _{2}\oplus \Z _{2}& \text{otherwise}.
 \end{array} \right.
\]
\end{thm}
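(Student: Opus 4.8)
The plan is to read off $H_1(\SH)$ from the already-known $H_1(\LHk)$ (Theorem~\ref{thm_abel_lmod}) through the extension
\[
1\longrightarrow \langle \zeta \rangle \longrightarrow \SH \stackrel{\theta }{\longrightarrow }\LHk \longrightarrow 1,
\]
with $\langle \zeta\rangle \cong \Z_k$. Since $\theta$ is surjective, abelianization gives a surjection $H_1(\SH)\to H_1(\LHk)$ whose kernel is the cyclic subgroup $\langle[\zeta]\rangle$ generated by the class $[\zeta]$ of $\zeta$, and $\langle[\zeta]\rangle$ is a quotient of $\Z_k$. Everything therefore reduces to (a) the order of $[\zeta]$ in $H_1(\SH)$ and (b) the isomorphism type of the extension $0\to\langle[\zeta]\rangle\to H_1(\SH)\to H_1(\LHk)\to 0$. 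Structurally this is governed by the five-term exact sequence
\[
H_2(\SH)\to H_2(\LHk)\to (\Z_k)_{\LHk}\to H_1(\SH)\to H_1(\LHk)\to 0,
\]
which identifies $\langle[\zeta]\rangle$ with a quotient of the coinvariants $(\Z_k)_{\LHk}$; so the first task is to compute how $\LHk$ acts on $\langle\zeta\rangle$.

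For the action I would exhibit a symmetric, orientation-preserving self-homeomorphism of $H_g$ that conjugates $\zeta$ to $\zeta^{-1}$ (geometrically, a half-rotation about an axis orthogonal to the rotation axis of $\zeta$, in the spirit of the lifts described in Section~5.1 of \cite{Omori-Yoshida}). Such an element forces $[\zeta]=-[\zeta]$ in $H_1(\SH)$, hence $2[\zeta]=0$ and $(\Z_k)_{\LHk}\cong \Z_{\gcd(2,k)}$. This settles the case of odd $k$ at once: then $(\Z_k)_{\LHk}=0$, so $[\zeta]=0$ and $H_1(\SH)\cong H_1(\LHk)=\Z\oplus\Z_2\oplus\Z_2$, which is the ``otherwise'' case of the statement.

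The remaining and hardest case is $k$ even, where the bound $2[\zeta]=0$ leaves the order of $[\zeta]$ equal to $1$ or $2$ and the parity of $n$ must be brought in. The concrete computation avoids $H_2(\LHk)$ altogether: take the explicit generators of $\LHk$ from Proposition~\ref{prop_LM} together with a defining set of relations, lift each generator to $\SH$ as in Corollary~\ref{thm_smod}, and record for every relation $r=1$ the defect exponent $a_r\in\Z/k$ by which its lift closes up only to $\zeta^{a_r}$. Abelianizing, these defects together with $2[\zeta]=0$ determine the order of $[\zeta]$, and the bookkeeping of the $a_r$ modulo $2$ is exactly where $g=n(k-1)$, and hence the parity of $n$, enters. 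I expect the outcome to be that when $n$ is odd one can build a homomorphism $\SH\to\Z_2$ sending $\zeta\mapsto 1$: this simultaneously shows $[\zeta]$ has order exactly $2$ and, being a retraction onto $\langle[\zeta]\rangle$, splits the extension in (b), giving $H_1(\SH)\cong H_1(\LHk)\oplus\Z_2$, i.e. $\Z\oplus\Z_2\oplus\Z_2\oplus\Z_2$ for $k\geq4$ and $\Z\oplus\Z_2\oplus\Z_2$ for $k=2$. When $n$ is even I would instead exhibit $\zeta$ explicitly as a product of commutators of the lifted generators, forcing $[\zeta]=0$ and $H_1(\SH)\cong H_1(\LHk)$. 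This parity analysis over the full relation set is the main obstacle.
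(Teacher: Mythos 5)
Your framework is sound as far as it goes: the kernel of $H_1(\SH )\to H_1(\LHk )$ is indeed the image $\left< [\zeta ]\right>$ of $\left< \zeta \right>$, the inverting element you describe does exist in $\SH $ (for $k\geq 3$ one can equally take a lift of the parity-reversing element $r$, which conjugates $\zeta $ to $\zeta ^{-1}$), so $2[\zeta ]=0$; for odd $k$ this already forces $[\zeta ]=0$ and $H_1(\SH )\cong H_1(\LHk )\cong \Z \oplus \Z _2\oplus \Z _2$, a complete proof of the ``otherwise'' case for odd $k$ given Theorem~\ref{thm_abel_lmod}. You are also right, and it is a nontrivial point, that for $n$ odd the correct target is a retraction $\SH \to \Z _2$ sending $\zeta \mapsto 1$: showing merely that $[\zeta ]\neq 0$ would not exclude $H_1(\SH )$ containing a $\Z _4$ summand with $[\zeta ]$ divisible by $2$, so the splitting must be proved and a detecting homomorphism does exactly that.

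The genuine gap is that the even-$k$ case --- which is the entire content of the dichotomy in the statement --- is never carried out. Everything that separates $n$ odd from $n$ even is deferred to ``record for every relation the defect exponent $a_r$'' and ``I expect the outcome to be\dots''. You neither fix a finite presentation of $\LHk $ (you would need the full relation set of Theorem~4.1 of \cite{Omori-Yoshida} for $k\geq 3$, or Tawn's presentation \cite{Tawn1} via \cite{Hirose-Kin} for $k=2$), nor compute a single defect exponent, nor exhibit the homomorphism $\SH \to \Z _2$ for $n$ odd or the expression of $\zeta $ as a product of commutators for $n$ even. What remains established is only that $H_1(\SH )$ is an extension of $H_1(\LHk )$ by a group of order at most $2$, which does not decide between the two lines of the theorem. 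Note also that this paper contains no proof of the statement to compare against step by step: it quotes the result from Corollary~A.9 of \cite{Hirose-Kin} for $k=2$ and Theorems~1.1 and 1.2 of \cite{Omori-Yoshida} for $k\geq 3$, where it is obtained by abelianizing explicit finite presentations of $\SH $ itself. Your route through the extension and the five-term sequence is genuinely different in structure and more economical for odd $k$, but the unexecuted relation-by-relation bookkeeping for even $k$ is not a routine verification --- it is precisely the substance of the cited computations.
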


For a group $G$, the minimal number of generators for $H_1(G)$ gives a lower bound of the minimal number of generators for $G$.  
By Theorem~\ref{thm_abel_lmod}, we see that the generating set for $\LHk $ in Theorem~\ref{thm_lmod} is minimal except for $k=2$ and odd $n$. 
Similarly, the generating set for $\SH $ in Corollary~\ref{thm_smod} is minimal for even $k\geq 4$ and odd $n$ by Theorem~\ref{thm_abel_smod}. 

\if0
When $k\geq 3$, Ghaswala and Winarski~\cite{Ghaswala-Winarski2} proved that $\mathrm{LMod}_{2n+2;k}$ is a proper subgroup of $\M $, and by Lemma~3.6 in~\cite{Ghaswala-Winarski1} (see also Lemma~\ref{lem_GW}), they proved that 
$\mathrm{LMod}_{2n+2;k}=\mathrm{LMod}_{2n+2;l}$ 
for $l>k\geq 3$. 
Hence we omit ``$k$'' in the notation of the liftable mapping class groups for $k\geq 3$ (i.e. we express $\mathrm{LMod}_{2n+2;k}=\LM $ and $\bm{\mathrm{LH}}_{2n+2;k}=\LH $ when $k\geq 3$). 
\fi


\section{Preliminaries}\label{Preliminaries}

\subsection{The balanced superelliptic covering space}\label{section_bscov}

In this section, we review the definition of the balanced superelliptic covering space from Section~2.1 in \cite{Omori-Yoshida}. 
For integers $n\geq 1$, $k\geq 2$, and $g=n(k-1)$, we describe the handlebody $H_{g}$ as follows. 
We take the unit 3-ball $B(1)$ in $\R ^3$ and $n$ mutually disjoint parallel copies $B(2),\ B(3),\ \dots ,\ B(n+1)$ of $B(1)$ by translations along the x-axis such that 
\[
\max \bigl( B(i)\cap (\R \times \{ 0\}\times \{ 0\} )\bigr) <\min \bigl( B(i+1)\cap (\R \times \{ 0\}\times \{ 0\} )\bigr)
\]
in $\R =\R \times \{ 0\}\times \{ 0\} $ for $1\leq i\leq n$ (see Figure~\ref{fig_bs_periodic_map}). 
Let $\zeta$ be the rotation of $\R ^3$ by $-\frac{2\pi }{k}$ about the $x$-axis. 
Then for each $1\leq i\leq n$, we connect $B(i)$ and $B(i+1)$ by $k$ 3-dimensional 1-handles such that the union of the $k$ 3-dimensional 1-handles are preserved by the action of $\zeta $ as in Figure~\ref{fig_bs_periodic_map}. 
Since the union of $B(1)\cup B(2)\cup \cdots \cup B(n+1)$ and the attached $n\times k$ 3-dimensional 1-handles is homeomorphic to $H_g(=H_{n(k-1)})$, we regard this union as $H_g$. 

\begin{figure}[h]
\includegraphics[scale=1.35]{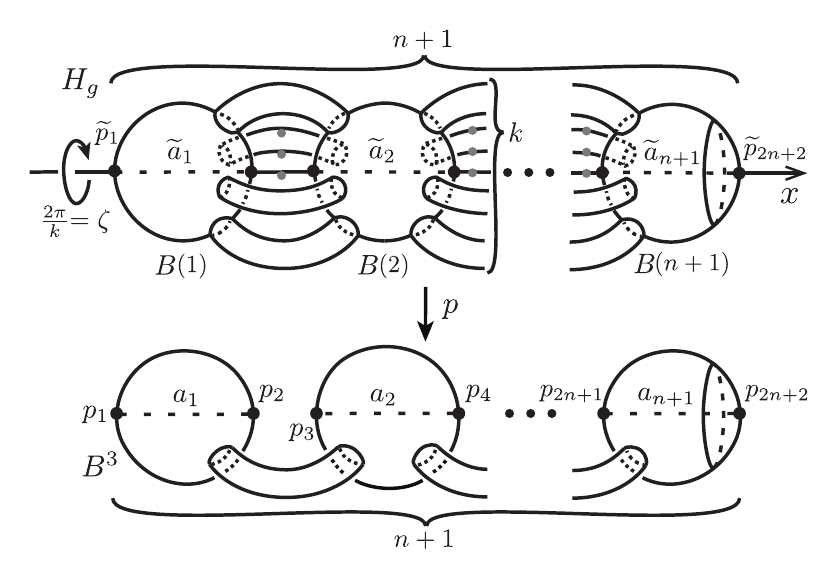}
\caption{The balanced superelliptic covering map $p=p_{g,k}\colon H_g\to B^3$.}\label{fig_bs_periodic_map}
\end{figure}

By the construction above, the action of $\zeta $ on $\R ^3$ induces the action on $H_g$ and the fixed points set of $\zeta =\zeta |_{H_g}$ is $\widetilde{\A }=H_g\cap (\R \times \{ 0\}\times \{ 0\})$. 
We call $\zeta $ the \textit{balanced superelliptic rotation} on $H_g$. 
We can see that the intersection $\widetilde{a}_i=B(i)\cap \widetilde{\A }$ for $1\leq i\leq n+1$ is a proper simple arc in $H_g$ and $\widetilde{\A }=\widetilde{a}_1\sqcup \widetilde{a}_2\sqcup \cdots \sqcup \widetilde{a}_{n+1}$ (see Figure~\ref{fig_bs_periodic_map}). 
The quotient space $H_g/\left< \zeta \right>$ is homeomorphic to $B^3$ and the induced quotient map $p=p_{g,k}\colon H_g\to B^3$ is a branched covering map with the branch points set $\A =p(\widetilde{\A })=p(\widetilde{a}_1)\sqcup p(\widetilde{a}_2)\sqcup \cdots \sqcup p(\widetilde{a}_{n+1})\subset B^3$. 
We call $p$ 
 the \textit{balanced superelliptic covering map}. 
Put 
\begin{itemize}
\item $a_i=p(\widetilde{a}_i)$ \quad for $1\leq i\leq n+1$, 
\item $\widetilde{p}_{2i-1}=\min \widetilde{a}_i$ and $\widetilde{p}_{2i}=\max \widetilde{a}_i$ \quad in $\R =\R \times \{ 0\}\times \{ 0\}$ for $1\leq i\leq n+1$, 
\item $p_i=p(\widetilde{p}_{i})$ \quad for $1\leq i\leq 2n+2$, 
\item $\B =\partial \A =\{ p_1, p_2, \dots , p_{2n+2}\}$, 
\item $\Sigma _g=\partial H_g$, \quad and \quad $S^2=\partial B^3$ (see Figure~\ref{fig_bs_periodic_map}).
\end{itemize}  
Then we also call the restriction $p|_{\Sigma _g}\colon \Sigma _g\to S^2$ the balanced superelliptic covering map and we often simply write $p|_{\Sigma _g}=p$. 
We note that the branch points set of $p\colon \Sigma _g\to S^2$ coincides with $\B $.

\subsection{Generators for the Hilden group and the spherical wicket group}\label{section_wicket-group}

In this section, we review generators for $\Hil =\LHt $ via homomorphic images from the spherical wicket group. 

Let $SB_{2n+2}$ be the \textit{spherical braid group} of $2n+2$ strands. 
We regard an element in $SB_{2n+2}$ as a $(2n+2)$-tangle in $S^2\times [0,1]$ which consists of $2n+2$ simple proper arcs whose one of the endpoints lies in $\B \times \{ 0\}$ and the other one lies in $\B \times \{ 1\}$, and we also regard $\A $ as a $(n+1)$-tangle in $B^3 $. 
Such $\A $ is called a \textit{wicket}. 
For $b\in SB_{2n+2}$, denote by $^b\! \A$ the $(n+1)$-tangle in $B^3 $ which is obtained from $b$ by attaching a copy of $B^3 $ with $\A $ to $S^2\times \{ 0\}$ such that $p_i\in \partial B^3$ is attached to the end of $i$-th strand in $b$ (see Figure~\ref{fig_wicket}), where we regard the manifold obtained by attaching the copy of $B^3$ to $S^2\times [0,1]$ along $S^2\times \{ 0\} $ as $B^3$. 
The \textit{spherical wicket group} $SW_{2n+2}$ is the subgroup of $SB_{2n+2}$ whose element $b$ satisfies the condition that $^b\! \A$ is isotopic to $\A $ relative to $\partial B^3=S^2\times \{ 1\}$. 
Brendle and Hatcher~\cite{Brendle-Hatcher} introduced the group $SW_{2n+2}$ and gave its finite presentation. 

\begin{figure}[h]
\includegraphics[scale=0.5]{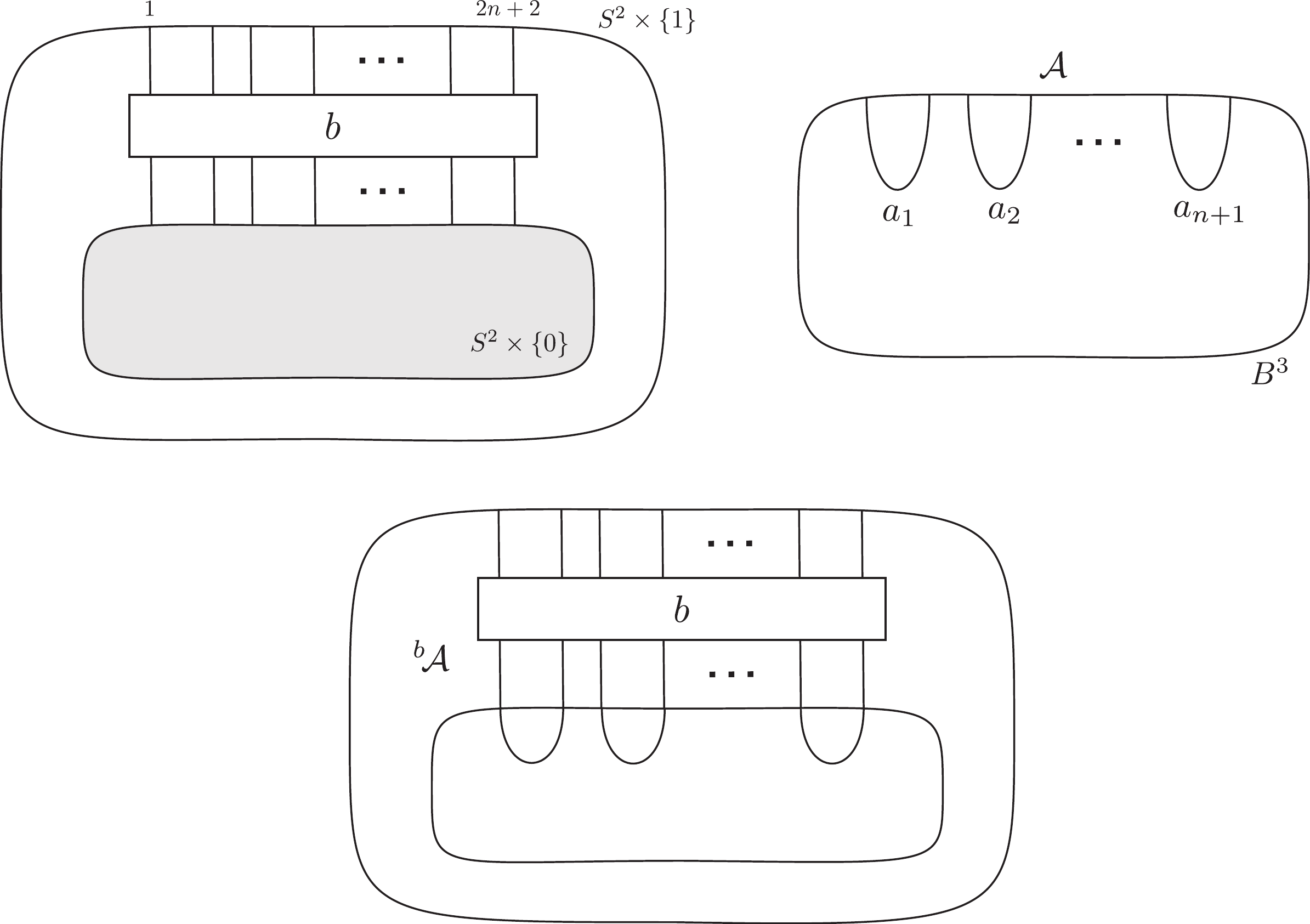}
\caption{A spherical braid $b$ and the tangles $\A $ and $^b\! \A $ in $B^3$.}\label{fig_wicket}
\end{figure}

Let $\sigma _i\in SB_{2n+2}$ be the half-twist about $i$-th and $(i+1)$-st strands as on the left-hand side in Figure~\ref{fig_braid_product_def}.  
It is well-known that $SB_{2n+2}$ is generated by $\sigma _1,\ \sigma _2,\ \dots ,\ \sigma _{2n+1}$. 
We can check that $\sigma _{2i-1}\in SW_{2n+2}$ for $1\leq i\leq n+1$. 
For $b_1$, $b_2\in SB_{2n+2}$, the product $b_1b_2$ is a braid as on the right-hand side in Figure~\ref{fig_braid_product_def}. 

\begin{figure}[h]
\includegraphics[scale=0.75]{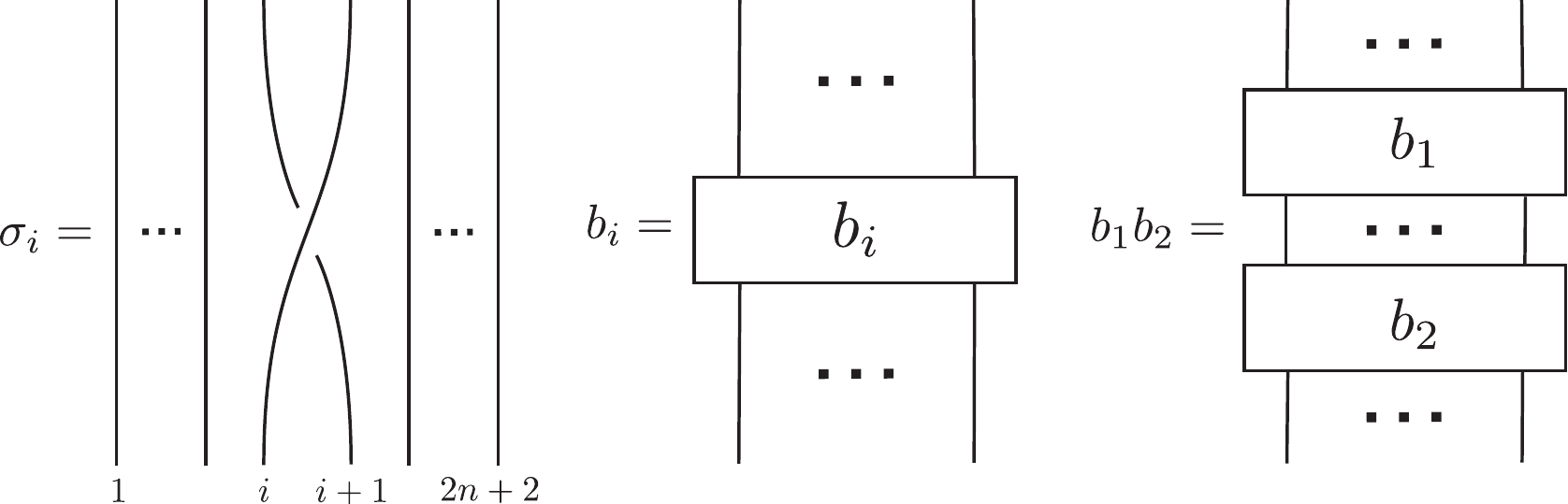}
\caption{The half-twist $\sigma_i\in SB_{2n+2}$ $(1\leq i\leq 2n+1)$ and the product $b_1b_2$ in $SB_{2n+2}$ for $b_i$ $(i=1,\ 2)$.}\label{fig_braid_product_def}
\end{figure}

Let $s_i$ and $r_i$ for $1\leq i\leq n$ be the $2n+2$ strands braids as in Figure~\ref{fig_r_i-s_i}. 
We see that $s_i$ and $r_i$ lie in $SW_{2n+2}$. 
Remark that the relations 
\begin{eqnarray*}
s_i=\sigma _{2i}\sigma _{2i+1}\sigma _{2i-1}\sigma _{2i} \quad  \text{and} \quad r_i=\sigma _{2i}^{-1}\sigma _{2i+1}^{-1}\sigma _{2i-1}\sigma _{2i}
\end{eqnarray*}
for $1\leq i\leq n$ hold in $SB_{2n+2}$. 
Brendle and Hatcher~\cite{Brendle-Hatcher} gave the following proposition. 

\begin{prop}\label{prop_Brendle-Hatcher}
For $n\geq 1$, $SW_{2n+2}$ is generated by $s_i$, $r_i$ for $1\leq i\leq n$, and $\sigma _{2j-1}$ for $1\leq j\leq n+1$. 
\end{prop}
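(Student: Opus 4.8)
The plan is to reinterpret $SW_{2n+2}$ as the fundamental group of the configuration space $\mathcal{W}_{n+1}$ of $n+1$ disjoint unknotted, unlinked wickets in $B^3$ with boundary on $S^2$, the boundary-point map $\mathcal{W}_{n+1}\to \mathrm{UConf}_{2n+2}(S^2)$ inducing the inclusion $SW_{2n+2}\hookrightarrow SB_{2n+2}$ (this is the viewpoint of Brendle and Hatcher~\cite{Brendle-Hatcher}). With this description I would argue by induction on $n$. For the base case $n=1$ one checks directly that $SW_4$ is generated by $s_1$, $r_1$, $\sigma_1$, $\sigma_3$: there are only two wickets, and every element is a composition of flipping one of the two wickets ($\sigma_1$, $\sigma_3$), swapping them ($s_1$), and linking one around the other ($r_1$), which is a short finite verification (equivalently, read off from the presentation of $SW_4$).

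For the inductive step I would use the forgetful map $\pi\colon \mathcal{W}_{n+1}\to \mathcal{W}_n$ that deletes the last wicket. The first task is to show that $\pi$ is a fibration with connected fiber $F$, where $F$ is the space of positions of a single wicket in the complement of the other $n$ wickets; connectivity follows because any one wicket can be isotoped to a standard position past the others, and the fibration property follows from isotopy extension. Granting this, the homotopy long exact sequence gives a short exact sequence
\[
1\longrightarrow \mathrm{im}\,\pi_1(F)\longrightarrow SW_{2n+2}\stackrel{\pi_\ast}{\longrightarrow }SW_{2n}\longrightarrow 1 .
\]
By induction $SW_{2n}$ is generated by $s_i$, $r_i$ for $1\leq i\leq n-1$ and $\sigma_{2j-1}$ for $1\leq j\leq n$, and each of these lifts to the same-named element of $SW_{2n+2}$ (adjoining a wicket at the far right does not disturb them). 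Hence it remains to show that $\mathrm{im}\,\pi_1(F)$ is contained in the subgroup generated by the listed elements, for which it suffices to treat a generating set of $\pi_1(F)$.

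The group $\pi_1(F)$ is generated by the loops in which the new wicket (a) flips itself, giving exactly $\sigma_{2n+1}$; (b) swaps with the adjacent $n$-th wicket, giving $s_n$; and (c) links around that wicket, giving $r_n$. A loop that drags the new wicket around a non-adjacent wicket $a_i$ can be \emph{combed} past the intervening wickets, so that it becomes a conjugate of $s_n$ or $r_n$ by a product of the $s_j$, $r_j$, $\sigma_{2j-1}$ with $j<n$ that are already available by induction. This places $\mathrm{im}\,\pi_1(F)$ inside the claimed subgroup and closes the induction.

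The main obstacle is the geometric input of the second and third steps: verifying that $\pi$ is genuinely a fibration with connected fiber, and---more delicately---the combing claim that a wicket transported around a distant wicket is isotopic, through wickets, to a word in the local moves at adjacent wickets. One must also check that the \emph{spherical} relations which distinguish $SW_{2n+2}$ from its planar analogue (for instance, that a strand may be pushed ``around'' $S^2$) contribute only relations and not additional generators. Alternatively, the entire argument can be bypassed by invoking the explicit finite presentation of $SW_{2n+2}$ given by Brendle and Hatcher~\cite{Brendle-Hatcher} and matching their generators with $s_i$, $r_i$, and $\sigma_{2j-1}$ directly.
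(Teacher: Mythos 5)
The paper offers no proof of this proposition: it is quoted directly from Brendle and Hatcher~\cite{Brendle-Hatcher}, who introduced $SW_{2n+2}$ and gave a finite presentation of it whose generators are exactly $s_i$, $r_i$, and $\sigma_{2j-1}$. So the fallback you mention in your last sentence is not an ``alternative'' --- it is the paper's entire treatment, and it is the only step that is actually needed here.

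Your standalone inductive argument, however, has a genuine gap, and it sits exactly where you suspect. The short exact sequence coming from the long exact sequence of the (claimed) fibration is fine as bookkeeping, and the lifting of the generators of $SW_{2n}$ is unproblematic; but the assertion that $\pi_1(F)$ is generated by the flip $\sigma_{2n+1}$, the swap $s_n$, the link $r_n$, and combed conjugates of these is essentially the whole theorem, not a lemma on the way to it. The fiber $F$ is the space of embeddings of one wicket in the complement of $n$ others (subject to the unknotted-unlinked condition), and identifying its fundamental group requires knowing that this embedding space deformation retracts onto a locus of standard wickets --- in \cite{Brendle-Hatcher} this rests on Hatcher's theorems on spaces of unknotted circles and arcs, which descend from the Smale conjecture. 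Without that input, both your ``combing'' claim and the claim that dragging the new wicket around the back of $S^2$ contributes no new generators are unsubstantiated assertions of precisely the geometric content being proved. The fibration property of the forgetful map also needs an argument (the space of wickets is not a configuration space of points, so Fadell--Neuwirth does not apply off the shelf; one needs a parametrized isotopy-extension argument adapted to the unlinking constraint). In short: as a sketch your outline is a reasonable reconstruction of the kind of argument Brendle and Hatcher run, but every hard step is deferred, so as a proof it is incomplete; the honest route, and the paper's route, is the citation.
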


\begin{figure}[h]
\includegraphics[scale=0.75]{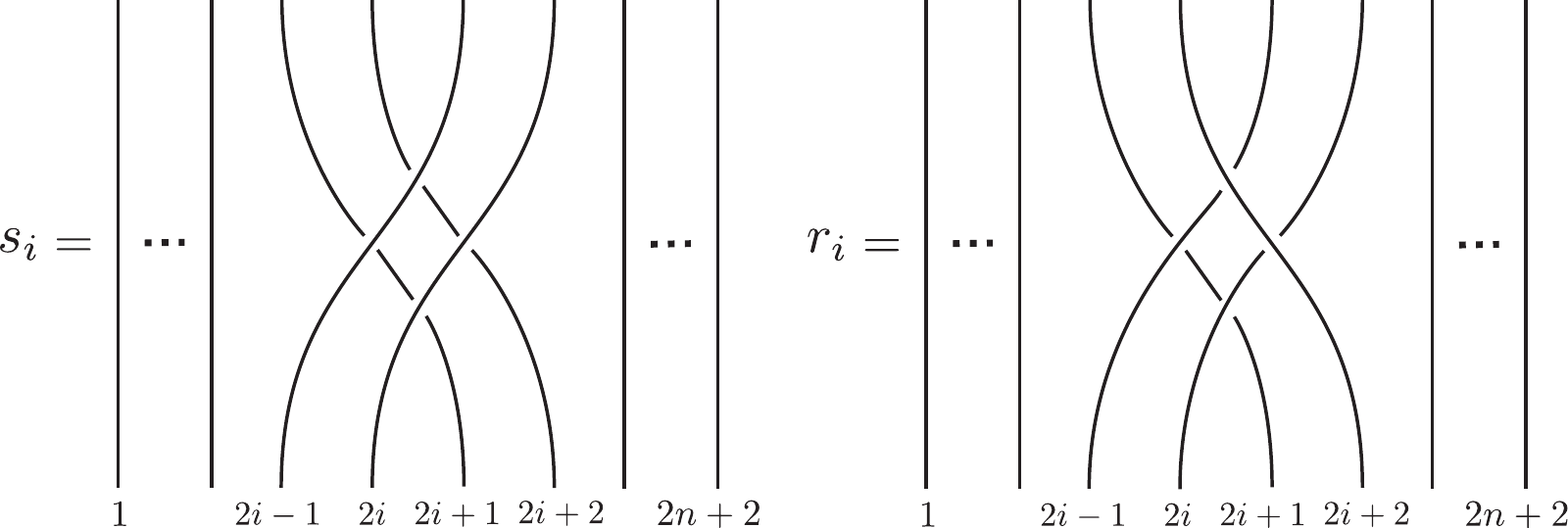}
\caption{The braids $s_i$ and $r_i$ in $SB_{2n+2}$ for $1\leq i\leq n$.}\label{fig_r_i-s_i}
\end{figure}

Let $l_i$ $(1\leq i\leq 2n+1)$ be an oriented simple arc on $S^2$ whose endpoints are $p_i$ and $p_{i+1}$ as in Figure~\ref{fig_path_l}. 
Put $L=l_1\cup l_2\cup \cdots \cup l_{2n+1}$. 
The isotopy class of a homeomorphism $\varphi $ on $\Sigma _0$ (resp. $B^3$) relative to $\B $ (resp. $\A $) is determined by the isotopy class of the image of $L$ by $\varphi $ relative to  $\B $. 
We identify $B^3$ with the 3-manifold with a sphere boundary on the lower side in Figure~\ref{fig_path_l} by some homeomorphism. 

\begin{figure}[h]
\includegraphics[scale=1.5]{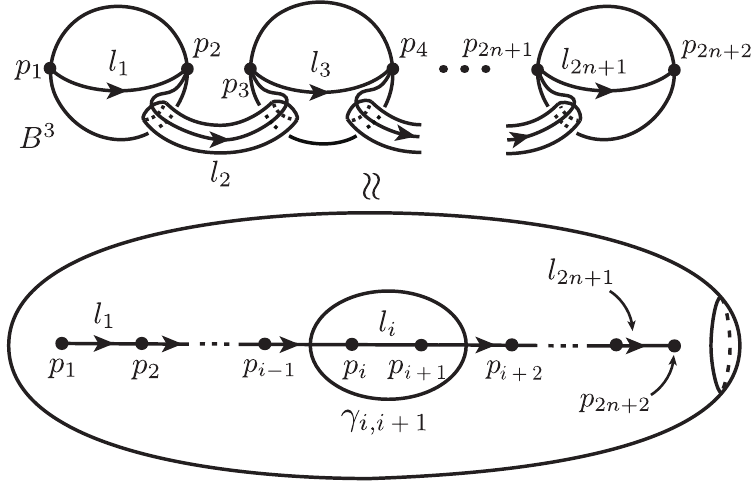}
\caption{A natural homeomorphism of $B^3$ and arcs $l_1,$ $l_2,\ \dots ,$ $l_{2n+1}$ and a simple closed curve $\gamma _{i,i+1}$ on $S^2$ for $1\leq i\leq 2n+1$.}\label{fig_path_l}
\end{figure}

Let $\sigma [l_i]$ for $1\leq i\leq 2n+1$ be a self-homeomorphism on $S^2$ which is described as the result of anticlockwise rotation of $l_i$ by $\pi $ in the regular neighborhood of $l_i$ in $S^2$ as in Figure~\ref{fig_sigma_l}. 
The self-homeomorphism $\sigma [l_i]$ is called the \textit{half-twist} along $l_i$. 
It is well-known that $\M $ is generated by $\sigma [l_1]$, $\sigma [l_2],\ \dots $, $\sigma [l_{2n+1}]$ (see for instance Section~9.1.4 in \cite{Farb-Margalit}). 
For maps or mapping classes $f$ and $g$, the product $gf$ means that $f$ is applied first. 
Then we have the surjective homomorphism 
\[
\Gamma \colon SB_{2n+2}\to \M 
\]
which is defined by $\Gamma (\sigma _i)=\sigma [l_i]$ for $1\leq i\leq 2n+1$.
The homomorphism $\Gamma $ has a kernel with order 2 which is generated by the full twist braid (see for instance Section~9.1.4 in \cite{Farb-Margalit}). 
We abuse notation and simply denote $\Gamma (b)$ for $b\in SB_{2n+2}$ by $b$ (i.e. we write $\sigma [l_i]=\sigma _i$, $\Gamma (s_i)=s_i$, and $\Gamma (r_i)=r_i$ in $\M $). 
Since $\Gamma (SW_{2n+2})=\Hil $ by Theorem~2.6 in \cite{Hirose-Kin}, we have the following proposition. 

\begin{prop}\label{prop_gen_Hilden-grp}
For $n\geq 1$, $\Hil $ is generated by $s_i$, $r_i$ for $1\leq i\leq n$, and $\sigma _{2j-1}$ for $1\leq j\leq n+1$. 
\end{prop}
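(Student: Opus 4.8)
The plan is to derive this statement directly from Proposition~\ref{prop_Brendle-Hatcher} by pushing the generating set of $SW_{2n+2}$ forward along the homomorphism $\Gamma$. The key observation is the elementary fact that if a group $G$ is generated by a set $X$ and $\phi\colon G\to H$ is a homomorphism, then the image $\phi(G)$ is generated by $\phi(X)$; applying this to the restriction $\Gamma|_{SW_{2n+2}}$ will do essentially all of the work.

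First I would recall that $\Gamma\colon SB_{2n+2}\to\M$ is a homomorphism, so its restriction $\Gamma|_{SW_{2n+2}}\colon SW_{2n+2}\to\M$ is again a homomorphism, and by Theorem~2.6 in \cite{Hirose-Kin} its image is exactly $\Gamma(SW_{2n+2})=\Hil$. Thus $\Gamma|_{SW_{2n+2}}$ is a surjective homomorphism onto $\Hil$. Next I would invoke Proposition~\ref{prop_Brendle-Hatcher}, which gives $SW_{2n+2}=\langle s_i,\ r_i\ (1\leq i\leq n),\ \sigma_{2j-1}\ (1\leq j\leq n+1)\rangle$. Applying the homomorphism principle above, $\Hil=\Gamma(SW_{2n+2})$ is generated by the images of these elements. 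By the notational convention already fixed in the excerpt (namely $\Gamma(s_i)=s_i$, $\Gamma(r_i)=r_i$, and $\Gamma(\sigma_{2j-1})=\sigma_{2j-1}$ in $\M$), these images are precisely $s_i$, $r_i$, and $\sigma_{2j-1}$, which yields the claim.

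I do not expect any genuine obstacle here: all of the substantive content has already been established in the cited results of Brendle--Hatcher and Hirose--Kin. The only point requiring a moment's care is bookkeeping, namely checking that the generators named in Proposition~\ref{prop_Brendle-Hatcher} map to the identically named elements of $\Hil\subset\M$, and that passing to the restriction $\Gamma|_{SW_{2n+2}}$ does not lose surjectivity onto $\Hil$ (it does not, since $\Hil$ is by definition this image). In particular the order-$2$ kernel of $\Gamma$ generated by the full twist plays no role, since we only use surjectivity of $\Gamma|_{SW_{2n+2}}$ onto $\Hil$ and never injectivity.
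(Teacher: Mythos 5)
Your proposal is correct and matches the paper's own argument exactly: the paper deduces the proposition in one line from $\Gamma(SW_{2n+2})=\Hil$ (Theorem~2.6 in the Hirose--Kin reference) together with Proposition~\ref{prop_Brendle-Hatcher}, which is precisely your push-forward of generators along the surjection $\Gamma|_{SW_{2n+2}}$. Your added remarks on the naming convention and the irrelevance of the order-$2$ kernel are sound bookkeeping that the paper leaves implicit.
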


\begin{figure}[h]
\includegraphics[scale=1.1]{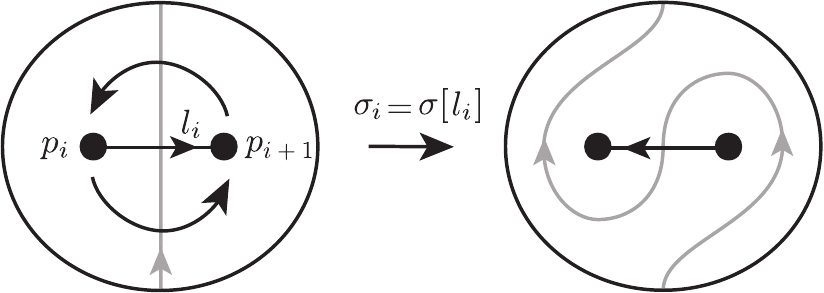}
\caption{The half-twist $\sigma [l_i]=\sigma _i$ for $1\leq i\leq 2n+1$.}\label{fig_sigma_l}
\end{figure}

\subsection{Generators for the liftable Hilden group}\label{section_liftable-element}

Assume that $k\geq 3$ in Section~\ref{section_liftable-element}. 
In this section, we review the generating set for the liftable Hilden group $\LHk =\LH $ for $k\geq 3$ of the presentation in Theorem~4.1 of~\cite{Omori-Yoshida} as homomorphic images from $SW_{2n+2}$. 
First, we review Ghaswala-Winarski's necessary and sufficient condition for lifting a homeomorphism on $S^2$ with respect to $p=p_{g,k}$ for $k\geq 3$. 
Since $\M $ is naturally acts on $\B =\{ p_1,\ p_2,\ \dots ,\ p_{2n+2}\}$, we have a surjective homomorphism 
\[
\Psi \colon \M \to S_{2n+2}
\]
given by $\Psi (\sigma _i)=(i\ i+1)$, where $S_{2n+2}$ is the symmetric group of degree $2n+2$. 

Put $\B _o=\{ p_1,\ p_3,\ \dots ,\ p_{2n+1}\}$ and $\B _e=\{ p_2,\ p_4,\ \dots ,\ p_{2n+2}\}$. 
An element $\sigma $ in $S_{2n+2}$ is \textit{parity-preserving} if $\sigma (\B _o)=\B _o$, and is \textit{parity-reversing} if $\sigma (\B _o)=\B _e$. 
An element $f$ in $\M $ is \textit{parity-preserving} (resp. \textit{parity-reversing}) if $\Psi (f)$ is \textit{parity-preserving} (resp. \textit{parity-reversing}). 
Let $W_{2n+2}$ be the subgroup of $S_{2n+2}$ which consists of parity-preserving or parity-reversing elements. 
\if0
, $S_{n+1}^o$ (resp. $S_{n+1}^e$) the subgroup of $S_{2n+2}$ which consists of elements whose restriction to $\B _e$ (resp. $\B _o$) is the identity map.  
Note that $S_{n+1}^o$ (resp. $S_{n+1}^e$) is a subgroup of $W_{2n+2}$, isomorphic to $S_{n+1}$, and generated by transpositions $(1\ 3)$, $(3\ 5),\ \dots $, $(2n-1\ 2n+1)$ (resp. $(2\ 4)$, $(4\ 6),\ \dots $, $(2n\ 2n+2)$). 
Then we have the following exact sequence:
\begin{eqnarray}\label{exact1}
1\longrightarrow S_{n+1}^o\times S_{n+1}^e\longrightarrow W_{2n+2}\stackrel{\pi }{\longrightarrow }\Z _2\longrightarrow 1, 
\end{eqnarray}
where the homomorphism $\pi \colon W_{2n+2}\to \Z _2$ is defined by $\pi (\sigma )=0$ if $\sigma $ is parity-preserving and $\pi (\sigma )=1$ if $\sigma $ is parity-reversing. 
\fi
Ghaswala and Winarski~\cite{Ghaswala-Winarski1} proved the following lemma. 
\begin{lem}[Lemma~3.6 in \cite{Ghaswala-Winarski1}]\label{lem_GW}
Let $\LM $ be the liftable mapping class group for the balanced superelliptic covering map $p_{g,k}$ for $n\geq 1$ and $k\geq3$ with $g=n(k-1)$. 
Then we have
\[
\LM =\Psi ^{-1}(W_{2n+2}).
\]
\end{lem}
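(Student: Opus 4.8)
The plan is to identify the restriction $p\colon \Sigma _g\setminus p^{-1}(\B )\to S^2\setminus \B $ with the unbranched cyclic cover classified by its monodromy, and then apply the standard lifting criterion for such covers. Let $x_1,\dots ,x_{2n+2}$ be the standard free generators of $\pi _1(S^2\setminus \B )$, where $x_i$ is a small counterclockwise loop around $p_i$ and $x_1x_2\cdots x_{2n+2}=1$. From the construction of $\zeta $ in Section~\ref{section_bscov}, the balanced superelliptic rotation has order $k$ and its local rotation numbers alternate along the branch points, so $p$ is the cover classified by the surjection $\phi \colon \pi _1(S^2\setminus \B )\to \langle \zeta \rangle \cong \Z _k$ with $\phi (x_i)=1$ for $p_i\in \B _o$ (odd $i$) and $\phi (x_i)=-1$ for $p_i\in \B _e$ (even $i$); this is well defined since $\sum _i\phi (x_i)=(n+1)(1)+(n+1)(-1)=0$. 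So the first step is simply to record this monodromy, which encodes the ``balanced'' alternation.

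Next I would invoke the lifting criterion. Because $\langle \zeta \rangle $ is abelian, $\phi $ factors through $H_1$ and is invariant under conjugation, so $\phi \circ f_*$ depends only on the class $f\in \M $ and not on any choice of basepoint or representing loops. A class $f$ lifts through $p$ if and only if $f_*(\ker \phi )=\ker \phi $, and since $\phi $ is a surjection onto $\Z _k$ this is equivalent to $\phi \circ f_*=u\cdot \phi $ for some unit $u\in \Z _k^\times $. As $f$ is orientation-preserving and sends $p_i$ to $p_{\tau (i)}$ with $\tau =\Psi (f)$, the loop $f_*(x_i)$ is freely homotopic to $x_{\tau (i)}$, whence $\phi (f_*(x_i))=\phi (x_{\tau (i)})$. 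Thus the lifting condition reads $\phi (x_{\tau (i)})=u\,\phi (x_i)$ for all $i$.

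Finally I would read off the combinatorics. Since every value $\phi (x_i)$ lies in $\{+1,-1\}$ and these two residues are distinct in $\Z _k$ for $k\geq 3$, the equation $\phi (x_{\tau (i)})=u\,\phi (x_i)$ forces $u\in \{+1,-1\}$: if $u=1$ then $\tau (\B _o)=\B _o$, so $\tau $ is parity-preserving, and if $u=-1$ then $\tau (\B _o)=\B _e$, so $\tau $ is parity-reversing, and conversely each such $\tau $ admits a suitable $u$. Hence $f$ lifts if and only if $\Psi (f)\in W_{2n+2}$, i.e. $\LM =\Psi ^{-1}(W_{2n+2})$. The one genuinely delicate point, which I would address before the combinatorial step, is the passage between the branched cover $p$ and its associated unbranched $\Z _k$-cover of the punctured sphere: one must verify that a lift of $f$ defined away from $\B $ extends over $p^{-1}(\B )$, which holds because $f$ permutes $\B $ and the cyclic branched cover is uniquely recovered from its monodromy. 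I expect this to be the only real obstacle; everything else is the bookkeeping above, and the hypothesis $k\geq 3$ enters precisely in separating the residues $+1$ and $-1$ (for $k=2$ they coincide, consistently with $\mathrm{LMod}_{2n+2;2}=\M $).
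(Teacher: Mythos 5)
Your argument is correct, and since the paper itself gives no proof of this lemma (it is imported verbatim as Lemma~3.6 of \cite{Ghaswala-Winarski1}), the relevant comparison is with that reference, whose proof runs along essentially the lines you give: the cover is classified by the balanced monodromy $\phi (x_i)=\pm 1\in \Z _k$ alternating with the parity of $i$, a mapping class $f$ lifts if and only if $\phi \circ f_{\ast }=u\cdot \phi $ for some unit $u\in \Z _k^{\times }$, and freely homotoping $f_{\ast }(x_i)$ to $x_{\tau (i)}$ with $\tau =\Psi (f)$ forces $u\in \{ +1,-1\} $, i.e.\ $\tau $ parity-preserving or parity-reversing. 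You also handle the two genuinely delicate points correctly --- conjugation-invariance of the criterion via the abelian target, and extending a lift over $p^{-1}(\B )$ --- and you locate precisely where $k\geq 3$ enters ($+1\neq -1$ in $\Z _k$, which degenerates at $k=2$ consistently with $\mathrm{LMod}_{2n+2;2}=\M $).
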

Lemma~\ref{lem_GW} implies that a mapping class $f\in \M $ lifts with respect to $p=p_{g,k}$ if and only if $f$ is parity-preserving or parity-reversing. 
By Lemma~2.2 in~\cite{Omori-Yoshida}, we have $\LH =\Hil \cap \LM $, namely the liftability for $p$ is equivalent to one for $p|_{\Sigma _g}$. 
Moreover, by Lemma~\ref{lem_GW}, when $k\geq 3$, the liftability of a homeomorphism on $S^2$ does not depend on $k$. 
Hence we omit ``$k$'' in the notation of the liftable mapping class group and the liftable Hilden group for $k\geq 3$ (i.e. we express $\mathrm{LMod}_{2n+2;k}=\LM $ and $\bm{\mathrm{LH}}_{2n+2;k}=\LH $ for $k\geq 3$). 

Recall that $s_i$, $r_i$ for $1\leq i\leq n$, and $\sigma _{2j-1}$ for $1\leq j\leq n+1$ generates $\Hil $ by Proposition~\ref{prop_gen_Hilden-grp}. 
We denote $t_j=\sigma _{2j-1}^2$ for $1\leq j\leq n+1$ and $r=\sigma _{1}\sigma _{3}\cdots \sigma _{2n+1}$. 
Then we can see that $\Psi (s_i)=\Psi (r_i)=(2i-1\ 2i+1)(2i\ 2i+2)$ for $1\leq i\leq n$, $\Psi (t_j)=1$ for $1\leq j\leq n+1$, and $\Psi (r)=(1\ 2)(3\ 4)\cdots (2n+1\ 2n+2)$. 
Hence $s_i$, $r_i$, and $t_j$ are parity-preserving and $r$ is parity-reversing, and these elements lie in $\LH $ by Lemma~\ref{lem_GW}. 
By Theorem~4.1 in~\cite{Omori-Yoshida}, we have the following proposition:

\begin{prop}\label{prop_gen_liftable-Hilden}
For $n\geq 1$, $\LH $ is generated by $s_i$, $r_i$ for $1\leq i\leq n$, $t_{j}$ for $1\leq j\leq n+1$, and $r$. 
\end{prop}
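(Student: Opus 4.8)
The plan is to leverage the known generating set of the full Hilden group $\Hil$ from Proposition~\ref{prop_gen_Hilden-grp} together with the identification $\LH = \Hil \cap \LM = \Hil \cap \Psi^{-1}(W_{2n+2})$ (Lemma~\ref{lem_GW} and the remark that $\LH=\Hil\cap\LM$), and to study the restriction of the permutation homomorphism $\Psi$ to $\Hil$. Write $G=\langle s_i,r_i\ (1\leq i\leq n),\ t_j\ (1\leq j\leq n+1),\ r\rangle$ for the subgroup generated by the proposed elements; since each of these already lies in $\LH$, the whole content is the reverse inclusion $\LH\subseteq G$. Let $\PH:=\ker(\Psi|_{\Hil})$ be the pure Hilden group. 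Because the trivial permutation is parity-preserving, $\PH\subseteq\LH$ and $\ker(\Psi|_{\LH})=\PH$, so there is a short exact sequence $1\to\PH\to\LH\xrightarrow{\Psi}\Psi(\LH)\to1$. I would therefore reduce the proposition to the two assertions $\Psi(G)=\Psi(\LH)$ and $\PH\subseteq G$, which together give $\LH=G\cdot\PH=G$.

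For the first assertion I would identify the images explicitly. The image $\Psi(\Hil)$ is precisely the stabilizer of the pairing $\{p_{2j-1},p_{2j}\}$, i.e.\ the hyperoctahedral group $S_2\wr S_{n+1}$ of order $2^{n+1}(n+1)!$, generated by the pair-flips $\Psi(\sigma_{2j-1})=(2j-1\ 2j)$ and the adjacent pair-swaps $\Psi(s_i)=\Psi(r_i)=(2i-1\ 2i+1)(2i\ 2i+2)$. Intersecting with $W_{2n+2}$ (the parity-preserving and parity-reversing permutations), a direct check shows that the parity-preserving elements of $\Psi(\Hil)$ are exactly the pair-permutations with \emph{no} flips, forming the diagonal $S_{n+1}$ generated by the $\Psi(s_i)$, while the parity-reversing ones are obtained from these by flipping \emph{all} pairs, i.e.\ by $\Psi(r)=(1\ 2)(3\ 4)\cdots(2n+1\ 2n+2)$. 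Hence $\Psi(\LH)=\Psi(\Hil)\cap W_{2n+2}$ is generated by $\Psi(s_i),\Psi(r_i),\Psi(r)$, so $\Psi(G)=\Psi(\LH)$ and $\LH=G\cdot\PH$.

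The remaining step, $\PH\subseteq G$, is where the real work lies, and I expect it to be the main obstacle. From $[\Psi(\Hil):\Psi(\LH)]=2^{n+1}(n+1)!/\bigl(2\,(n+1)!\bigr)=2^{n}$ one gets $[\Hil:\LH]=2^{n}$, so I would run the Reidemeister--Schreier procedure on $\LH\leq\Hil$ relative to the generators $s_i,r_i,\sigma_{2j-1}$ of $\Hil$, using the transversal $T=\{\prod_{j\in S}\sigma_{2j-1}:S\subseteq\{2,\dots,n+1\}\}$ of $2^{n}$ coset representatives. Since the factors $\sigma_{2j-1}$ commute, the Schreier generators coming from $x=\sigma_{2j-1}$ collapse either to $1$ or to $t_j=\sigma_{2j-1}^{2}$, with the $\sigma_1$-column forcing the reduction modulo the all-flip class and thereby producing words in $r$, while those from $x=s_i,r_i$ become products of $s_i,r_i$ with $T$-prefixes. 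The crux is the bookkeeping that rewrites every such Schreier generator as a word in $s_i,r_i,t_j,r$: this is carried out by pulling the relations back to the spherical wicket group $SW_{2n+2}$ and applying the braid relations between the $\sigma_{2l-1}$ and the $s_i,r_i$ together with $r=\sigma_1\sigma_3\cdots\sigma_{2n+1}$. This rewriting is exactly the computation encoded in the presentation of Theorem~4.1 of \cite{Omori-Yoshida}, from which the proposition also follows at once once that presentation is available.
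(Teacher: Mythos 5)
The first thing to say is that the paper contains no independent proof of this proposition: it is obtained by directly quoting Theorem~4.1 of \cite{Omori-Yoshida}, whose presentation of $\LH $ has exactly the generators $s_i$, $r_i$, $t_j$, $r$. Your closing sentence---that the proposition ``follows at once'' once that presentation is available---is therefore precisely the paper's proof, and in that fallback form your proposal is correct and matches the paper. The scaffolding you build before it is also sound as far as it goes: $\Psi (\Hil )$ is indeed the stabilizer of the pairing $\{p_{2j-1},p_{2j}\}$, i.e.\ the wreath product $S_2\wr S_{n+1}$ (any homeomorphism of $(B^3,\A )$ permutes the $n+1$ arcs, and the generators $\Psi (\sigma _{2j-1})$, $\Psi (s_i)$ realize the full stabilizer); an element of this wreath product is parity-preserving iff it flips no pair and parity-reversing iff it flips every pair, so $\Psi (\LH )=\Psi (\Hil )\cap W_{2n+2}$ has order $2(n+1)!$, is generated by the $\Psi (s_i)$ and $\Psi (r)$, and the index $[\Hil :\LH ]=2^n$ and the validity of your transversal $T$ all check out.

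However, read as a self-contained derivation, your proposal has a genuine gap exactly where you predict it: the containment $\PH \subseteq G$ (equivalently, that every Schreier generator lies in $\left< s_i,r_i,t_j,r\right>$) is the entire difficulty, and it is asserted rather than carried out. The $\sigma _{2j-1}$-columns do collapse as you say: for $t=\prod _{l\in S}\sigma _{2l-1}$ with $j\in S$, commuting-support conjugation gives exactly $t_j$, and the $\sigma _1$-column produces words such as $\sigma _1(\sigma _3\cdots \sigma _{2n+1})^{-1}=r\,t_2^{-1}\cdots t_{n+1}^{-1}$. But the $s_i$- and $r_i$-columns over all $2^n$ coset representatives require genuine relations in $SW_{2n+2}$, e.g.\ the identity $s_i\sigma _{2i-1}s_i^{-1}=\sigma _{2i+1}$ (which follows from the braid relations applied to $s_i=\sigma _{2i}\sigma _{2i+1}\sigma _{2i-1}\sigma _{2i}$) and its $r_i$-analogues, and none of this bookkeeping is done; declaring it to be ``exactly the computation encoded in'' Theorem~4.1 is deferring to the reference, not proving. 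There is also a structural redundancy worth noting: once you run Reidemeister--Schreier on $\LH \leq \Hil $ you obtain generators of $\LH $ itself, so your preliminary reduction to the two assertions $\Psi (G)=\Psi (\LH )$ and $\PH \subseteq G$ becomes unnecessary. In short: either you invoke the cited presentation, in which case your proof coincides with the paper's one-line citation and the rest is superfluous, or you intend an independent proof, in which case the Reidemeister--Schreier rewriting must actually be completed---and that computation is the substance of the theorem you are citing.
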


\section{Proof of main theorem}\label{section_lmod}

In this section, we prove Theorem~\ref{thm_lmod}. 
Assume that $n\geq 1$ and $k\geq 2$. 
Denote $s=s_n\cdots s_2s_1\in SW_{2n+2}$ and $s=\Gamma (s)\in \LHk $ (see Figure~\ref{fig_braid_s}). 
Theorem~\ref{thm_lmod} follows from the next proposition. 

\begin{prop}\label{prop_LM}
\begin{enumerate}
\item For $n\geq 1$, $\LHt =\Hil $ is generated by $s\sigma _1$, $s_{1}$, and $r_1$. 
\item For $n\geq 1$ and $k\geq 3$, $\LHk =\LH $ is generated by $sr$, $s_{1}$, and $r_1$.  
\end{enumerate}
\end{prop}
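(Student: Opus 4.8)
The plan is to show in each case that the three proposed elements already generate the full generating set of Proposition~\ref{prop_gen_Hilden-grp} (for part (1)) or Proposition~\ref{prop_gen_liftable-Hilden} (for part (2)). The engine of both arguments is that $s=s_n\cdots s_1$ acts as a \emph{block shift} under conjugation. First I would record the shift relations
\[
s^{-1}s_is=s_{i+1},\qquad s^{-1}r_is=r_{i+1}\quad(1\leq i\leq n-1),\qquad s^{-1}\sigma_{2j-1}s=\sigma_{2j+1}\quad(1\leq j\leq n).
\]
These are instances of the standard fact that for a chain of elements satisfying braid-type relations, the product of the chain conjugates each member to the next; the required chain relations among the $s_i$, $r_i$ and the half-twists $\sigma_{2j-1}$ come from the braid relations in $SB_{2n+2}$. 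I would also record two commutation facts: $\sigma_1$ commutes with $s_i$ and $r_i$ for $i\geq 2$ (disjoint supports), and $r=\sigma_1\sigma_3\cdots\sigma_{2n+1}$ commutes with every $s_i$ and $r_i$ (a short braid computation, since near the $i$-th block only $\sigma_{2i-1}\sigma_{2i+1}$ of $r$ interacts with $s_i$, and $\sigma_{2i-1}\sigma_{2i+1}$ commutes with $s_i=\sigma_{2i}\sigma_{2i+1}\sigma_{2i-1}\sigma_{2i}$).

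For part (1), let $G=\langle s\sigma_1,\,s_1,\,r_1\rangle\subseteq\Hil$. Because $\sigma_1$ commutes with $s_i,r_i$ for $i\geq 2$, conjugation by $s\sigma_1$ agrees with conjugation by $s$ on the building blocks: for $1\leq i\leq n-1$, $(s\sigma_1)^{-1}s_i(s\sigma_1)=\sigma_1^{-1}(s^{-1}s_is)\sigma_1=\sigma_1^{-1}s_{i+1}\sigma_1=s_{i+1}$, and likewise $r_i\mapsto r_{i+1}$. Inducting from $s_1,r_1\in G$ then gives all $s_i,r_i\in G$, whence $s=s_n\cdots s_1\in G$ and therefore $\sigma_1=s^{-1}(s\sigma_1)\in G$. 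Conjugating $\sigma_1$ by powers of $s$ via the last shift relation produces $\sigma_{2j-1}\in G$ for all $j$. Thus $G$ contains the generating set of Proposition~\ref{prop_gen_Hilden-grp}, so $G=\Hil$.

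For part (2), let $G=\langle sr,\,s_1,\,r_1\rangle\subseteq\LH$ and set $b=sr$. Using that $r$ centralizes every block, conjugation by $b$ still shifts: $b^{-1}s_ib=r^{-1}(s^{-1}s_is)r=r^{-1}s_{i+1}r=s_{i+1}$ for $1\leq i\leq n-1$, and similarly $r_i\mapsto r_{i+1}$. Hence all $s_i,r_i\in G$, so $s\in G$ and then $r=s^{-1}(sr)\in G$. What remains is to produce the elements $t_j=\sigma_{2j-1}^2$. Since $s^{-1}t_js=(s^{-1}\sigma_{2j-1}s)^2=\sigma_{2j+1}^2=t_{j+1}$, all $t_j$ are $s$-conjugate inside $G$, so it suffices to exhibit a single $t_j\in G$; then Proposition~\ref{prop_gen_liftable-Hilden} gives $G=\LH$.

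The hard part will be exactly this last extraction of one $t_j$. The obvious products only yield $t_j$ up to a non-liftable conjugation, namely $s_ir_i=\sigma_{2i}t_i\sigma_{2i}$ and $s_ir_i^{-1}=\sigma_{2i}t_{i+1}\sigma_{2i}$ with $\sigma_{2i}\notin\LH$, and forming ratios or products of such conjugates never cancels the $\sigma_{2i}$ (one only obtains further conjugates of products of $t$'s, which are different group elements). So the crux is a dedicated braid identity that isolates a bare $t_j$ from $s_i$, $r_i$, and $r$ — here I expect to use essentially that $r$ is the product of \emph{all} the odd half-twists and commutes with each $t_j$, so that conjugating the relations $s_ir_i=\sigma_{2i}t_i\sigma_{2i}$ by $r$ and comparing with the original untangles the even half-twist; alternatively one can invoke the relation expressing $t_j$ in the presentation behind Proposition~\ref{prop_gen_liftable-Hilden} (Theorem~4.1 of~\cite{Omori-Yoshida}). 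Once one $t_j\in G$ is secured, the $s$-conjugacy above finishes part (2).
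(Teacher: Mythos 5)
Your overall route is the paper's route, and part (1) matches its proof line by line. In part (2) the paper likewise obtains the $s_i\in G$ from the $(sr)$-shift, then $s\in G$, $r=s^{-1}(sr)\in G$, and $r_i=s^{-(i-1)}r_1s^{i-1}\in G$. The step you explicitly leave open --- producing one $t_j$ --- is closed in the paper exactly by your ``alternative'': relation (4) of Theorem~4.1 in \cite{Omori-Yoshida} reads $r_1r_2\cdots r_ns_n\cdots s_2s_1t_1=1$, so $t_1=s_1^{-1}s_2^{-1}\cdots s_n^{-1}r_n^{-1}\cdots r_2^{-1}r_1^{-1}\in G$, and then $t_j=s^{-(j-1)}t_1s^{j-1}\in G$. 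Your primary mechanism (conjugating $s_ir_i=\sigma_{2i}t_i\sigma_{2i}$ by $r$ and ``comparing'') cannot substitute for this: comparing a relation with a conjugate of it only yields identities among elements already known to lie in $G$, never an expression of $t_j$ as a word in the generators of $G$. So the gap you flagged is real, but it is filled precisely by the citation you named.

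One of your recorded facts is false and should be dropped: $r$ does \emph{not} commute with $r_i$ when $n\geq 2$, so your claimed shift $(sr)^{-1}r_i(sr)=r_{i+1}$ fails there. The braid relations do give $r_i\sigma_{2i-1}r_i^{-1}=\sigma_{2i+1}$, but for the other factor one computes $r_i\sigma_{2i+1}r_i^{-1}=\sigma_{2i}^{-1}\sigma_{2i+1}^{-2}\sigma_{2i}^{-1}\sigma_{2i-1}\sigma_{2i}\sigma_{2i+1}^{2}\sigma_{2i}$; if this equaled $\sigma_{2i-1}$, then forgetting the puncture $p_{2i+2}$ (which kills $\sigma_{2i+1}^{2}$) would force $\sigma_{2i-1}\sigma_{2i}^{2}=\sigma_{2i}^{2}\sigma_{2i-1}$ in $\mathrm{Mod}_{0,2n+1}$, which is false for $n\geq 2$ because the half-twist $\sigma_{2i-1}$ carries the curve supporting the Dehn twist $\sigma_{2i}^{2}$ to a non-isotopic curve. (For $n=1$ the commutation does hold, via the sphere relation $\sigma_1\sigma_2\sigma_3^2\sigma_2\sigma_1=1$ in $\mathrm{Mod}_{0,4}$, which is presumably why it looked plausible.) The error is harmless to your argument: $r$ does commute with every $s_i$ (your computation there is correct), so the $(sr)$-shift of the $s_i$ goes through, $s$ and hence $r$ lie in $G$, and the $r_i$ are then recovered from the $s$-shift $r_i=s^{-(i-1)}r_1s^{i-1}$ --- which is exactly how the paper argues, never invoking an $r_i$-shift under $sr$.
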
 

\begin{figure}[h]
\includegraphics[scale=1.1]{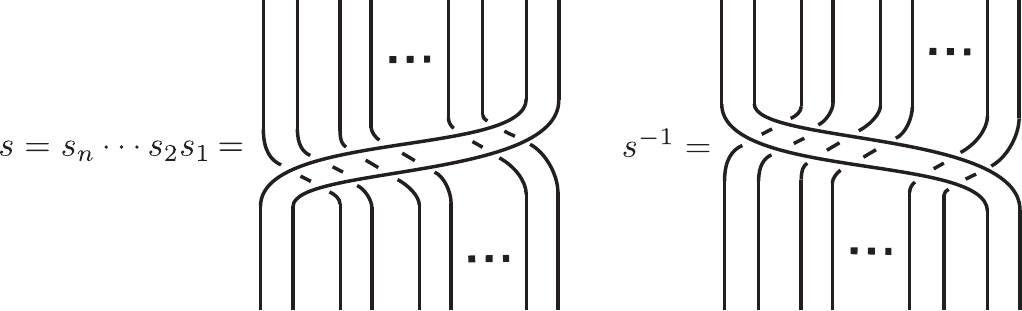}
\caption{The braid $s=s_n\cdots s_2s_1$ and $s^{-1}$.}\label{fig_braid_s}
\end{figure}

\begin{proof}[Proof of Proposition~\ref{prop_LM} for $k=2$]
Let $G$ be a subgroup of $\LHt =\Hil $ which is generated by $s\sigma _1$, $s_{1}$, and $r_1$. 
By Proposition~\ref{prop_gen_Hilden-grp}, $\Hil $ is generated by $s_i$, $r_i$ for $1\leq i\leq n$, and $\sigma _{2j-1}$ for $1\leq j\leq n+1$. 
Since $(s\sigma _1)^{-(i-1)}s_1(s\sigma _1)^{i-1}=s_i$ for $2\leq i\leq n$, we have $s_i\in G$ for $2\leq i\leq n$. 
Hence $s=s_n\cdots s_2s_1\in G$ and $\sigma _1=s^{-1}(s\sigma _1)\in G$. 
We can check that $r_i=s^{-(i-1)}r_1s^{i-1}\in G$ for $2\leq i\leq n$ and $\sigma _{2j-1}=s^{-(j-1)}\sigma _1s^{j-1}\in G$ for $2\leq j\leq n+1$. 
Therefore $\Hil =G$ and we have completed the proof of Proposition~\ref{prop_LM} for $k=2$.  
\end{proof}

\begin{proof}[Proof of Proposition~\ref{prop_LM} for $k\geq 3$]
Let $G$ be a subgroup of $\LHk =\LH $ which is generated by $sr$, $s_{1}$, and $r_1$. 
By Proposition~\ref{prop_gen_liftable-Hilden}, $\LH $ is generated by $s_i$, $r_i$ for $1\leq i\leq n$, $t_{j}$ for $1\leq j\leq n+1$, and $r$. 
Since $(sr)^{-(i-1)}s_1(sr)^{i-1}=s_i$ for $2\leq i\leq n$, we have $s_i\in G$ for $2\leq i\leq n$. 
Hence $s=s_n\cdots s_2s_1\in G$ and $r=s^{-1}(sr)\in G$. 
We can check that $r_i=s^{-(i-1)}r_1s^{i-1}\in G$ for $2\leq i\leq n$. 
Finally, by the relation~(4) of Theorem~4.1 in \cite{Omori-Yoshida}, we have the relation $r_1r_2\cdots r_ns_n\cdots s_2s_1t_{1}=1$. 
Thus, $t_{1}=s_1^{-1}s_2^{-1}\cdots s_n^{-1}r_n^{-1}\cdots r_2^{-1}r_1^{-1}\in G$ and $t_{j}=s^{-(j-1)}t_1s^{j-1}\in G$ for $2\leq j\leq n+1$.  
Therefore $\Hil =G$ and we have completed the proof of Proposition~\ref{prop_LM} for $k\geq 3$.  
\end{proof}


\par
{\bf Acknowledgement:} 
The author was supported by JSPS KAKENHI Grant Numbers JP19K23409 and JP21K13794.

\end{document}